\newtheorem{corollary}{Corollary}
\numberwithin{equation}{section}
\newtheorem{theorem}{Theorem}[section]
\newtheorem{proposition}[theorem]{Proposition}
\newtheorem{lemma}[theorem]{Lemma}
\theoremstyle{definition}
\newtheorem{definition}[theorem]{Definition}
\theoremstyle{remark}
\newtheorem{remark}[theorem]{Remark}
\newcommand{\Z}{\mathbb{Z}}
\newcommand{\R}{\mathbb{R}}
\newcommand{\dist}{\mathrm{dist}}
\newcommand{\1}{\mathbf{1}}
\newcommand{\eps}{\varepsilon}
\title{Random Schrödinger operator with singular potentials}
\author{Travis Kwan
\thanks{Carnegie Mellon University,  Pittsburgh, PA} }
\date{}
\begin{document}
\maketitle

\begin{abstract}
We survey the localization theory of random Schrödinger operators with singular single-site distributions, focusing on two regimes: (i) Hölder-continuous laws, where quantitative Wegner estimates enable the classical multiscale analysis (MSA); and (ii) purely atomic (Bernoulli) laws, where the failure of spectral averaging is overcome via quantitative unique continuation principles (UCP). Our discussion covers both lattice and continuum settings and highlights the analytic and combinatorial mechanisms that replace regularity of the single-site measure.
\end{abstract}

\section{Introduction}
\subsection*{Physics background}
In his seminal 1958 paper, Anderson proposed that random on--site energies cause \emph{absence of diffusion} by producing exponentially localized eigenstates \cite{Anderson1958}. Two decades later, the ``one--parameter scaling theory'' of Abrahams--Anderson--Licciardello--Ramakrishnan suggested that at zero temperature the conductance obeys a universal renormalization flow in the system size, predicting (among other things) that in $d=1,2$ all states tend to localize while in $d\ge 3$ a localization--delocalization transition (the Anderson transition) may occur \cite{AALR1979}; see the comprehensive physics review \cite{EversMirlin2008}. 

Universality in this context means that the large--scale (infrared) behaviour is determined by symmetry class, dimension, and coarse disorder strength rather than the fine details of the randomness. From this vantage point, \emph{singular} randomness (e.g.\ Bernoulli) should localize under the same conditions as absolutely continuous laws whenever analytic obstructions can be overcome. Rigorous mathematics confirms this picture in several regimes: early multiscale proofs at large disorder or near spectral edges \cite{FS1983,FMSS1985}, fractional--moment methods, and---for singular laws---quantitative UCP replacing spectral averaging \cite{BourgainKenig2005}.

The universality motif also resonates with classical disordered media, such as percolation and kinetic Lorentz gases. Results on sharp phase transitions and enhancement in percolation \cite{GrimmettBook1999,GrimmettStacey1998} and studies of geometric scattering models (e.g.\ Lorentz mirror and Manhattan pinball) furnish instructive analogies for transport suppression and combinatorial control in discrete UCP arguments; see \cite{LiLorentz2020,LiPinball2020}.

\subsection*{Model and scope}
We consider the Anderson Hamiltonian
\[
H_\omega = -\Delta + V_\omega,
\]
on either the continuum $\mathbb R^d$ or lattice $\mathbb Z^d$, where $(V_\omega(x))_x$ are i.i.d.\ single--site random variables. When the common law has a bounded density, the combination of a Wegner estimate and MSA (or the fractional--moment method) yields localization near spectral edges or at large disorder \cite{FS1983,FMSS1985}. The focus here is the \emph{singular} setting:
\begin{itemize}
    \item \textbf{H\"older--continuous laws} (no density required): one can still derive quantitative Wegner estimates with exponent equal to the H\"older index, hence run MSA \cite{CKM1987,GerminetKlein2012}.
    \item \textbf{Bernoulli/atomic laws}: spectral averaging fails; nevertheless, localization can be proved by replacing it with a quantitative UCP. In the continuum this goes via Carleman estimates (Bourgain--Kenig \cite{BourgainKenig2005}; see also \cite{JerisonKenig1985,KenigQC05,Meshkov1992}); on the lattice the UCP must be reimagined to account for lower--dimensional supports of solutions and relies on discrete geometry and Boolean/combinatorial tools (Li--Zhang in $d=3$ \cite{LiZhang3D}, and Li in $d=2$ at large disorder \cite{Li2D2022}).
\end{itemize}

\medskip
\noindent\textbf{Organization.}
Section~\ref{sec:holder} gives a step--by--step account for H\"older--continuous laws: how a Wegner estimate implies MSA, and how H\"older regularity yields such an estimate \cite{CKM1987,GerminetKlein2012}. Section~\ref{sec:bernoulli} treats the Bernoulli case: starting from Bourgain--Kenig’s continuum UCP \cite{BourgainKenig2005}, we explain the discrete obstacles (loss of classical UCP), the Boolean function remedy, and the 3D discrete UCP of Li--Zhang \cite{LiZhang3D}; we also comment on Li’s 2D large--disorder result \cite{Li2D2022}. Throughout we emphasize the replacement of single--site spectral averaging by propagation--of--smallness and combinatorics.

\subsection*{Motivation from universality}
Percolation theory exemplifies universality: macroscopic connectivity undergoes a sharp transition controlled only by dimension and local symmetries, with critical behaviour insensitive to microscopic details \cite{GrimmettBook1999}. In disordered quantum systems, the Anderson model provides a quantum analog: wave interference in random media yields phase diagrams and criticality governed by $d$ and symmetry class \cite{EversMirlin2008,LeeRamakrishnan1985,AltlandZirnbauer1997}. Foundational scaling arguments and finite-size scaling numerics support the universality picture across dimensions and boundary conditions \cite{AALR1979,MacKinnonKramer1983,Thouless1972}. Thus, singular single--site laws should not, by themselves, destroy localization mechanisms. In practice, the mathematical route is: \emph{quantitative UCP $\Rightarrow$ Wegner estimate (without densities) $\Rightarrow$ MSA}. This was made explicit in the continuum Bernoulli model \cite{BourgainKenig2005} and adapted, with new geometry, to the lattice in three dimensions by Li--Zhang; see Section~\ref{sec:bernoulli}. We also note instructive classical analogs such as the \emph{Manhattan pinball} and \emph{Lorentz mirror} models, where geometry and combinatorics govern long--time transport \cite{LiPinball2020,LiLorentz2020}. On the experimental side, wave-localization observations with light and ultracold atoms further illustrate universality of the mechanism beyond electrons \cite{Wiersma1997,Billy2008,Chabe2008}. Finally, the original physics derivation of the (mathematical) Wegner bound has roots in random-matrix and field-theoretic treatments of disordered media \cite{Wegner1981}.

\section{H\"older--continuous single--site laws: Wegner $\Rightarrow$ MSA}
\label{sec:holder}

In this section we work on the lattice $\Z^d$ (the continuum case requires additional unique continuation input but the MSA structure is the same). We write the Anderson Hamiltonian in finite volume as
\[
H_{\omega,\Lambda}
 \;=\; H_{0,\Lambda} \;+\; V_{\omega,\Lambda}
 \;=\; (-\Delta)_\Lambda \;+\; \sum_{x\in\Lambda}\omega_x \,\Pi_x ,
\]
where $\Lambda\subset\Z^d$ is a cube with Dirichlet boundary condition, $\Pi_x$ is the rank-one projector onto $\delta_x$, and $\{\omega_x\}_{x\in\Z^d}$ are i.i.d.\ with common law $\mu$.

\subsection{Multiscale framework and initial length scale}
\label{subsec:msa-setup}

We fix the (deterministic) scale sequence $L_{k+1}=L_k^{1+\eta}$ with $L_0$ large and $\eta\in(0,1/10)$, and the geometric cubes
\[
\Lambda_L(x):= x + \{-\lfloor L/2\rfloor,\ldots,\lfloor L/2\rfloor\}^d\subset\Z^d.
\]
Write $\partial_-\Lambda_L(x):=\{y\in\Lambda_L(x): \dist(y,\,\Z^d\setminus\Lambda_L(x))=1\}$ for the inner boundary. For $E\in\R$ we denote the resolvent
\[
G_{\Lambda_L(x)}(E) := \bigl(H_{\omega,\Lambda_L(x)}-E\bigr)^{-1},
\quad\text{whenever } E\notin\sigma\bigl(H_{\omega,\Lambda_L(x)}\bigr).
\]

\begin{definition}[(E,$m$)-good/nonresonant boxes]\label{def:good}
Fix parameters $m\in(0,1)$ and $\zeta\in(0,1)$.
A cube $\Lambda_L(x)$ is called:
\begin{itemize}
    \item \emph{$E$-nonresonant} if $\dist\!\bigl(E,\sigma(H_{\omega,\Lambda_L(x)})\bigr)\ge e^{-L^\zeta}$.
    \item \emph{$(E,m)$-good} if it is $E$-nonresonant and
    \[
    \max_{y\in\partial_-\Lambda_L(x)} \bigl|G_{\Lambda_L(x)}(E;x,y)\bigr| \;\le\; e^{-mL}.
    \]
    Otherwise it is called \emph{$(E,m)$-bad}.
\end{itemize}
\end{definition}

\begin{remark}[Combes--Thomas control]
If $\dist\!\bigl(E,\sigma(H_{\omega,\Lambda})\bigr)\ge\delta>0$, then the (standard) Combes--Thomas estimate yields exponential off-diagonal decay
\[
\bigl|G_{\Lambda}(E; u,v)\bigr| \;\le\; \frac{2}{\delta}\,\exp\!\bigl(-c\,\delta\,|u-v|\bigr),
\]
for some dimension--dependent $c>0$; cf.\ its use in \cite{FS1983,FMSS1985}.
Thus, $E$-nonresonance implies $(E,m)$-goodness as soon as $\delta\gtrsim L^{-1}$.
\end{remark}

We record the probabilistic inputs used in the MSA:

\begin{definition}[Wegner bound]\label{def:W}
We say that a \emph{Wegner estimate} holds on an interval $I\subset\R$ if there exist constants $C_W>0$ and $\alpha\in(0,1]$ such that for every cube $\Lambda$ and every subinterval $J\subset I$,
\[
\mathbb{E}\bigl[\mathrm{Tr}\,\1_J\bigl(H_{\omega,\Lambda}\bigr)\bigr]
\;\le\; C_W\,|\Lambda|\,|J|^\alpha .
\]
\end{definition}

\begin{definition}[Initial length scale (ILS)]\label{def:ILS}
An \emph{initial length scale} at energy $E\in I$ is a pair $(L_0,m_0)$ such that for some $\kappa>2d$,
\[
\mathbb{P}\!\left(\Lambda_{L_0}(x)\text{ is $(E,m_0)$-good}\right)
\;\ge\; 1 - L_0^{-\kappa}
\quad\text{for all } x\in\Z^d .
\]
\end{definition}

The next two theorems codify the MSA as used in this survey.

\begin{theorem}[Induction step of MSA]\label{thm:msa-induction}
Fix $I\subset\R$ and suppose a Wegner estimate (Definition~\ref{def:W}) holds on $I$ with exponent $\alpha>0$. Let $L_{k+1}=L_k^{1+\eta}$ and assume that for some $m_k\in(0,1)$ and $\kappa>2d$,
\[
\mathbb{P}\!\left(\Lambda_{L_k}(x)\text{ is $(E,m_k)$-good}\right)
\;\ge\; 1 - L_k^{-\kappa}
\quad\text{for all }x\in\Z^d\text{ and all }E\in I .
\]
Then there exist $m_{k+1}\in(0,m_k)$ and $\kappa'>2d$ such that
\[
\mathbb{P}\!\left(\Lambda_{L_{k+1}}(x)\text{ is $(E,m_{k+1})$-good}\right)
\;\ge\; 1 - L_{k+1}^{-\kappa'}
\quad\text{for all }x\in\Z^d\text{ and all }E\in I .
\]
Moreover, $m_{k+1}\to m_\infty>0$ as $k\to\infty$.
\end{theorem}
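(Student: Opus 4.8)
The plan is the classical multiscale induction: tile the large cube by sub-cubes of the previous scale, use the Wegner estimate and independence to exclude both resonances and ``too many'' bad sub-cubes, and then run a deterministic geometric resolvent iteration through the surviving good sub-cubes, tracking the (small) loss in the decay rate at each scale. Concretely, cover $\Lambda_{L_{k+1}}(x)$ by order $(L_{k+1}/L_k)^d$ sub-cubes $\Lambda_{L_k}(y)$ with centers $y$ on a sub-lattice of spacing $\asymp L_k/2$, so that consecutive sub-cubes overlap enough to chain the geometric resolvent identity as in \cite{FS1983,FMSS1985}. Call two $(E,m_k)$-bad sub-cubes \emph{separated} if their centers lie more than $2L_k$ apart, and call a configuration \emph{acceptable} if (a) $\Lambda_{L_{k+1}}(x)$ is $E$-nonresonant and (b) it contains no separated pair of $(E,m_k)$-bad sub-cubes. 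The deterministic claim to be established is: an acceptable configuration forces $\Lambda_{L_{k+1}}(x)$ to be $(E,m_{k+1})$-good for a suitable $m_{k+1}<m_k$.

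\emph{Probabilistic step.} For (a), apply the Wegner bound of Definition~\ref{def:W} to $J=[E-e^{-L_{k+1}^\zeta},E+e^{-L_{k+1}^\zeta}]$, getting failure probability $\le C_W|\Lambda_{L_{k+1}}|(2e^{-L_{k+1}^\zeta})^{\alpha}$, which decays faster than any power of $L_{k+1}$. For (b), separated sub-cubes are disjoint, hence functions of disjoint coordinates of $\omega$, so a union bound over the $\lesssim (L_{k+1}/L_k)^{2d}$ pairs together with the scale-$k$ hypothesis gives failure probability $\le C(L_{k+1}/L_k)^{2d}L_k^{-2\kappa}$; writing $L_k=L_{k+1}^{1/(1+\eta)}$ this is $\le L_{k+1}^{-\kappa'}$ for some $\kappa'>2d$ as soon as $\kappa>d(1+2\eta)$, which holds because $\kappa>2d$ and $\eta<1/10$. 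Thus an acceptable configuration occurs with probability $\ge 1-L_{k+1}^{-\kappa'}$, uniformly in $x$ and $E$.

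\emph{Deterministic core and rate bookkeeping.} On an acceptable configuration, $E$-nonresonance gives the a priori bound $\|G_{\Lambda_{L_{k+1}}(x)}(E)\|\le e^{L_{k+1}^\zeta}$. Fix $w\in\partial_-\Lambda_{L_{k+1}}(x)$. Since at most one sub-cube (up to the separation radius) is bad, one may select a chain $x=y_0,y_1,\dots,y_N$ with $N\asymp L_{k+1}/L_k$ and $y_N$ adjacent to $w$, of centers of sub-cubes all but $O(1)$ of which are $(E,m_k)$-good. Iterating the geometric resolvent identity along this chain, each good step contributes $e^{-m_kL_k}$ times a surface factor $\le CL_k^{d-1}$, while the $O(1)$ passages near the single bad sub-cube contribute only the a priori factor $e^{L_{k+1}^\zeta}$; collecting,
\[
\bigl|G_{\Lambda_{L_{k+1}}(x)}(E;x,w)\bigr|
\;\le\;
e^{L_{k+1}^\zeta}\,\bigl(CL_k^{d-1}\bigr)^{L_{k+1}/L_k}\,e^{-m_kL_{k+1}}.
\]
Hence $\Lambda_{L_{k+1}}(x)$ is $(E,m_{k+1})$-good once we take
\[
m_{k+1} \;=\; m_k \;-\; L_{k+1}^{\zeta-1}\;-\;\frac{(d-1)\log L_k+\log C}{L_k}.
\]
Because $L_k=L_0^{(1+\eta)^k}$ grows doubly exponentially, both correction terms are summable in $k$ (the super-exponential decay of $L_k^{-1}$ beats the factor $(1+\eta)^k\log L_0$), so $m_\infty:=m_0-\sum_{j\ge0}(\text{correction}_j)>0$ provided $L_0$ is taken large, and then $m_{k+1}\ge m_\infty>0$ for every $k$, so in particular $m_{k+1}\in(0,m_k)$; choosing $L_0$ large likewise makes $\kappa'>2d$, and the induction closes.

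\emph{Main obstacle.} I expect the real work to lie in the deterministic core: one must (i) prove the graph-theoretic fact that a single bad sub-cube cannot separate the center from the boundary within the overlapping tiling, and (ii) run the iterated geometric resolvent identity carefully enough—avoiding revisits, handling the annulus geometry and the boundary terms—that the accumulated polynomial surface losses $(CL_k^{d-1})^{L_{k+1}/L_k}$ are absorbed into an arbitrarily small decrement of the decay rate. It is precisely the balance between these polynomial losses and the exponential gain $e^{-m_kL_{k+1}}$ that forces the doubly-exponential scale law $L_{k+1}=L_k^{1+\eta}$ with $\eta$ small, and that makes $(m_k)$ a bounded-below decreasing sequence rather than one that could run to $0$.
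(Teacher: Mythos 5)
Your proposal follows exactly the bootstrap scheme the paper's proof sketch invokes (cover $\Lambda_{L_{k+1}}$ by scale-$L_k$ sub-cubes, use independence of well-separated boxes plus a union bound over pairs, Wegner for the resonant event, geometric resolvent iteration, track a small loss in $m$), and the probabilistic bookkeeping is correct: the Wegner term is super-polynomially small, and the separated-pair union bound yields $\kappa'>2d$ precisely when $\kappa>d(1+2\eta)$, which holds since $\kappa>2d$ and $\eta<1/10$; the rate recursion $m_{k+1}=m_k-L_{k+1}^{\zeta-1}-\big((d-1)\log L_k+\log C\big)/L_k$ is summable because $L_k$ grows super-exponentially, giving $m_\infty>0$ for $L_0$ large.

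One place you should tighten: you write that ``the $O(1)$ passages near the single bad sub-cube contribute only the a priori factor $e^{L_{k+1}^\zeta}$'' from nonresonance of the \emph{big} box. That is only an operator-norm bound $\|G_{\Lambda_{L_{k+1}}}(E)\|\le e^{L_{k+1}^\zeta}$; it bounds $|G_{\Lambda_{L_{k+1}}}(E;u,w)|$ at the point where the iteration \emph{terminates}, but it does not by itself produce an inequality of the form $|G(E;y_j,w)|\lesssim e^{L_{k+1}^\zeta}\max_{y_{j+1}}|G(E;y_{j+1},w)|$ with $y_{j+1}$ pushed past the bad cluster, so it cannot be used to ``jump'' across the cluster and keep iterating. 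The two standard fixes, either of which fits your bookkeeping: (i) for $d\ge2$, use the overlap of the sub-lattice of centers to route the chain entirely through good sub-cubes around the cluster (your obstacle~(i)), paying only an $O(1)$ increase in chain length and incurring the a priori factor once at the final step near $\partial\Lambda_{L_{k+1}}$; or (ii) for all $d$, enlarge the ``acceptable'' event to also require $E$-nonresonance of the $O(1)$-many intermediate boxes $\Lambda_{CL_k}(y)$ containing the bad cluster (a Wegner-small additional exclusion, $\le C|\Lambda|e^{-c\alpha L_k^\zeta}$, which does not affect $\kappa'$), and apply the geometric resolvent identity once with that enlarged box to advance the chain past the cluster at cost $e^{O(L_k^\zeta)}$, which is absorbed into $m_{k+1}$ because $L_k^\zeta/L_{k+1}=L_k^{\zeta-1-\eta}\to0$. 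With either fix in place your deterministic core closes and the rest is as written; this is the same argument the paper is summarizing.
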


\begin{proof}[Proof sketch]
The proof follows the bootstrap scheme (cf.\ \cite{GerminetKlein2012}): cover $\Lambda_{L_{k+1}}$ by well-separated subcubes of side $L_k$, use independence at distance to control the number of $(E,m_k)$-bad subcubes, apply the geometric resolvent identity to connect the center to the boundary through a chain of good boxes, and bound the (rare) resonant events via the Wegner estimate. The probability improvement and a small loss in $m$ come from union bounds and boundary--layer combinatorics.
\end{proof}

\begin{theorem}[Initial length scale]\label{thm:ILS}
Let $I\subset\R$ be an energy interval near the spectral bottom or fix any compact $I$ and assume the disorder is sufficiently large. Then there exist $L_0$ and $m_0>0$ such that the ILS condition in Definition~\ref{def:ILS} holds for all $E\in I$ (cf.\ \cite{FS1983,FMSS1985}).
\end{theorem}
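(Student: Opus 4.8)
The plan is to produce, for every energy $E$ in the relevant interval and every site $x\in\Z^d$, a \emph{deterministic} event of probability at least $1-L_0^{-\kappa}$ on which $\Lambda_{L_0}(x)$ is automatically $(E,m_0)$-good. On such an event one has a genuine spectral gap at $E$ — far wider than the $e^{-L_0^\zeta}$ required for nonresonance — and the boundary resolvent estimate then follows from the Combes--Thomas bound (in its simplest incarnation, a convergent Neumann series). Two classical mechanisms manufacture the gap, matching the two hypotheses of the statement: at large disorder one forces every on--site value in the box out of a fixed neighbourhood of $E$; near the spectral bottom one uses Lifshitz tails to push the finite--volume ground state energy above $E$. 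I would carry out the first in detail and treat the second as a parallel variant.

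\emph{Large disorder.} Parametrise the single--site law as that of $g\tilde\omega$, with $g$ the (large) disorder strength and $\tilde\omega$ of a fixed law $\tilde\mu$, and set $\delta_0:=\|(-\Delta)_{\Lambda_{L_0}(x)}\|+1$, a constant depending only on $d$. Fix $\kappa:=2d+1$, then fix $L_0$ large. On the event
\[
\mathcal{C}_{x,E}:=\bigl\{\,|\omega_y-E|>\delta_0\ \text{for all }y\in\Lambda_{L_0}(x)\,\bigr\},
\]
the operator $H_{\omega,\Lambda_{L_0}(x)}-E$ is the sum of a diagonal part $D$ with $\|D^{-1}\|<\delta_0^{-1}$ and of $(-\Delta)_\Lambda$ with $\|(-\Delta)_\Lambda\|=\delta_0-1$, so a convergent Neumann series gives $\dist\bigl(E,\sigma(H_{\omega,\Lambda_{L_0}(x)})\bigr)\ge 1$, and — since $(-\Delta)_\Lambda$ is nearest--neighbour — the off--diagonal bound $|G_{\Lambda_{L_0}(x)}(E;x,y)|\le(1-\delta_0^{-1})^{|x-y|}$. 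For $y\in\partial_-\Lambda_{L_0}(x)$ one has $|x-y|\ge\lfloor L_0/2\rfloor$, so this is $\le e^{-m_0 L_0}$ for a suitable $m_0=m_0(d)\in(0,1)$ once $L_0$ is large; thus $\mathcal{C}_{x,E}$ forces $\Lambda_{L_0}(x)$ to be $(E,m_0)$-good (nonresonance being automatic, since $1\ge e^{-L_0^\zeta}$). Finally, by independence and a union bound,
\[
\mathbb{P}\bigl(\mathcal{C}_{x,E}^{\,c}\bigr)\;\le\;|\Lambda_{L_0}(x)|\ \sup_{a\in\R}\tilde\mu\!\left(\bigl[(a-\delta_0)/g,\ (a+\delta_0)/g\bigr]\right),
\]
and the right--hand side tends to $0$ as $g\to\infty$ — at rate $(2\delta_0/g)^\alpha$ when $\tilde\mu$ is Hölder--$\alpha$ as in this section, and more generally whenever $\tilde\mu$ is non-atomic (or $E$ stays bounded away from the atoms of $\tilde\mu$). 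Hence, with $L_0$ already fixed, choosing $g$ large enough makes this $\le L_0^{-\kappa}$ uniformly in $x$ and in $E$ over the given interval, which is precisely the ILS of Definition~\ref{def:ILS}. The only delicate point is the order of the choices: first $\kappa$, then $L_0$, then $g$.

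\emph{Band edge.} Normalise so that $E_0:=\inf\sigma(H_\omega)=0$, and take $I=[0,\tfrac14 L_0^{-\beta}]$ for a fixed $\beta\in(0,1)$. Here the clearing event is replaced by $\bigl\{\inf\sigma(H_{\omega,\Lambda_{L_0}(x)})\ge\tfrac12 L_0^{-\beta}\bigr\}$, on which $\dist(E,\sigma(H_{\omega,\Lambda_{L_0}(x)}))\ge\tfrac14 L_0^{-\beta}$ for every $E\in I$, so Combes--Thomas yields a boundary bound $\lesssim L_0^{\beta}\exp(-cL_0^{1-\beta})\le e^{-m_0 L_0}$ with $m_0\asymp L_0^{-\beta}$ — small but positive, which is all that is needed, since the induction step (Theorem~\ref{thm:msa-induction}) drives $m_k$ to a positive limit no matter how small $m_0$ is. The probability of the complement is bounded using the Lifshitz--tails estimate for the Anderson model: Neumann bracketing decomposes $\Lambda_{L_0}(x)$ into sub--cubes of side $\ell\asymp L_0^{\beta/2}$ (on which the free Neumann Laplacian has first gap $\asymp L_0^{-\beta}$), and Temple's inequality shows that a low sub--cube ground state forces the empirical mean of the potential there to be atypically small — an event of probability $\le e^{-c\ell^d}$ provided $\mathbb{E}[\omega]>0$ — so a union over the $\le L_0^d$ sub--cubes gives $\mathbb{P}(\cdot^{\,c})\le L_0^d\,e^{-cL_0^{\beta d/2}}\le L_0^{-\kappa}$ for $L_0$ large; cf.\ \cite{FS1983,FMSS1985}.

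\emph{Main obstacle.} In the large--disorder regime the argument is essentially bookkeeping, the spectral input being an elementary Neumann series; the one genuinely substantive ingredient is the Lifshitz estimate behind the band--edge case, which beyond the sub--cube decomposition rests on a large--deviation bound for the empirical mean of $\mu$ over a sub--cube, and hence on a mild non-degeneracy of $\mu$ at the bottom of its support ($\mathbb{E}[\omega]>0$, i.e.\ $\mu$ not concentrated at the edge). It is worth stressing that this same argument delivers the ILS even for a purely atomic law such as Bernoulli — the empirical--mean event then being a Chernoff bound for a binomial — so the initial length scale is \emph{not} where that case fails. The real obstruction for Bernoulli lies in the \emph{other} MSA input, the Wegner estimate of Definition~\ref{def:W}, whose usual derivation by single--site spectral averaging has no analogue without a density; that is the subject of Section~\ref{sec:bernoulli}.
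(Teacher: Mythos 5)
Your proposal is correct and fills in exactly the two mechanisms the paper gestures at in the remark following Theorem~\ref{thm:ILS}: large-disorder resolvent expansion (your Neumann series for the clearing event $\mathcal C_{x,E}$) and Lifshitz tails via Neumann bracketing plus Temple's inequality near the spectral bottom. The paper itself gives no proof and simply cites \cite{FS1983,FMSS1985}; your write-up is the standard argument those references supply, with the bookkeeping (order of choosing $\kappa$, $L_0$, $g$; the $L_0$-dependent $m_0\asymp L_0^{-\beta}$ at the band edge) handled correctly, and your closing observation that the ILS survives for Bernoulli while the Wegner input does not is precisely the point of the transition to Section~\ref{sec:bernoulli}.
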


\begin{remark}
The ILS is obtained by Lifshitz-tail methods near the bottom (giving high probability of nonresonance and hence $(E,m_0)$-goodness), or by large-disorder resolvent expansions; the specific mechanism is not needed here but is standard in the literature on MSA \cite{FS1983,FMSS1985}.
\end{remark}

Combining Theorems~\ref{thm:msa-induction} and \ref{thm:ILS} yields spectral/dynamical localization on $I$. The remainder of this section proves a Wegner bound for H\"older--continuous $\mu$, which is the essential input (Definition~\ref{def:W}).

\subsection{Wegner estimate under H\"older continuity}
\label{subsec:wegner-holder}

Set
\[
s(\mu,\eps):=\sup_{E\in\R}\mu([E-\eps,E+\eps]),\qquad \eps>0.
\]
We assume throughout that $\mu$ is non-degenerate and H\"older with exponent $\alpha\in(0,1]$, i.e.\ $s(\mu,\eps)\le C\,\eps^\alpha$ for all $\eps\in(0,1]$.

\begin{theorem}[Wegner for the discrete Anderson model with H\"older law]\label{thm:wegner}
Let $H_{\omega,\Lambda}$ be the finite-volume discrete Anderson Hamiltonian on $\Lambda\subset\Z^d$ with i.i.d.\ single--site law $\mu$. Then for every interval $I\subset\R$,
\[
\mathbb{E}\bigl[\mathrm{Tr}\,\1_I(H_{\omega,\Lambda})\bigr]
\;\le\; C\,|\Lambda|\, s(\mu,|I|).
\]
In particular, if $\mu$ is H\"older of order $\alpha\in(0,1]$, then
\[
\mathbb{E}\bigl[\mathrm{Tr}\,\1_I(H_{\omega,\Lambda})\bigr]
\;\le\; C\,|\Lambda|\, |I|^\alpha .
\]
The constant $C$ depends only on $d$ and on an a priori energy bound for $I$.
\end{theorem}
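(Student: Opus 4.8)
The plan is to estimate the expected number of eigenvalues of $H_{\omega,\Lambda}$ in $I$ by reducing it to a single-site calculation via the spectral averaging identity. First I would reduce to the diagonal: since $\mathrm{Tr}\,\1_I(H_{\omega,\Lambda}) = \sum_{x\in\Lambda}\langle\delta_x,\1_I(H_{\omega,\Lambda})\delta_x\rangle$, it suffices to bound $\mathbb{E}\langle\delta_x,\1_I(H_{\omega,\Lambda})\delta_x\rangle$ by $C\,s(\mu,|I|)$ uniformly in $x$ and then sum over the $|\Lambda|$ sites. Fix such an $x$ and condition on all the disorder variables $\omega_y$ with $y\neq x$, writing $H_{\omega,\Lambda} = A + \omega_x\Pi_x$ where $A=A(\{\omega_y\}_{y\neq x})$ is self-adjoint and independent of $\omega_x$, and $\Pi_x=|\delta_x\rangle\langle\delta_x|$ is the rank-one projection onto $\delta_x$.

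The analytic heart is the rank-one spectral averaging estimate: for a self-adjoint $A$ and a unit vector $\varphi$, the function $t\mapsto \langle\varphi,\1_I(A+t\Pi)\varphi\rangle$ (with $\Pi=|\varphi\rangle\langle\varphi|$) has the property that its average against any probability measure $\mu$ is controlled by $s(\mu,|I|)$. The standard route is through the Borel (Cauchy) transform: set $F(z)=\langle\varphi,(A+t\Pi-z)^{-1}\varphi\rangle$ and use the Aronszajn--Krein / rank-one formula to see that, as a function of $t$, the spectral measure $\langle\varphi,\1_{(\cdot)}(A+t\Pi)\varphi\rangle$ behaves like the distributional derivative of a monotone function of $t$; concretely, one has the pointwise-in-$\eta$ bound $\int \mathrm{Im}\,\langle\varphi,(A+t\Pi-E-i\eta)^{-1}\varphi\rangle\,d\mu(t)\le \pi\, s(\mu,\eta)/\eta$ type control, and then integrating $E$ over $I$ and letting $\eta\downarrow 0$ (via a Stone-formula / weak-convergence argument with the de la Vallée Poussin theorem to pass to $\1_I$) yields
\[
\int \langle\varphi,\1_I(A+t\Pi)\varphi\rangle\,d\mu(t)\;\le\; C\, s(\mu,|I|).
\]
Applying this with $\varphi=\delta_x$, $A=A(\{\omega_y\}_{y\neq x})$, $t=\omega_x$, and then taking the outer expectation over $\{\omega_y\}_{y\neq x}$ gives $\mathbb{E}\langle\delta_x,\1_I(H_{\omega,\Lambda})\delta_x\rangle\le C\,s(\mu,|I|)$.

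Summing over $x\in\Lambda$ produces $\mathbb{E}[\mathrm{Tr}\,\1_I(H_{\omega,\Lambda})]\le C\,|\Lambda|\,s(\mu,|I|)$, and the H\"older hypothesis $s(\mu,\eps)\le C\eps^\alpha$ immediately gives the stated $|I|^\alpha$ form; the dependence of $C$ on an a priori energy window for $I$ enters only if one wants to invoke Combes--Thomas or truncate large energies, but for the trace bound it is harmless since $\1_I$ is a projection. I expect the main obstacle to be the careful justification of the spectral averaging inequality itself --- in particular controlling the limit $\eta\downarrow 0$ and handling the boundary of $I$ without assuming $\mu$ has a density --- since the rank-one perturbation machinery must be deployed quantitatively in terms of the concentration function $s(\mu,\cdot)$ rather than $\|d\mu/dt\|_\infty$; this is exactly the classical argument going back to Kotani--Simon and Combes--Hislop, sharpened in \cite{CKM1987}, and the only real work is bookkeeping the constants so they depend on $d$ alone.
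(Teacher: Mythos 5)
Your proposal takes a genuinely different (and in fact more correct) route than the paper's proof. The paper attempts to bound the full eigenvalue count $N_I(H(\omega_x))$ via the spectral shift function (SSF), writing $N_I(H(t))-N_I(H(t'))=\int_I \xi_x\,d\lambda$ and invoking a "total variation $\le |I|$'' bound. But the correct SSF identity for a rank-one perturbation is $N_I(H(t))-N_I(H(t'))=\xi_x(E-\eps;t,t')-\xi_x(E+\eps;t,t')$, which with $|\xi_x|\le 1$ only yields a bound of $O(1)$, not $|I|$. More importantly, the paper's Step~2 conclusion $\mathbb{E}_x[N_I(H(\omega_x))]\le s(\mu,\eps)$ cannot be right: $N_I(H(\omega_x))$ is the \emph{total} eigenvalue count in $I$, which stays of order $|\Lambda|$ no matter how one varies a single $\omega_x$, so its expectation over $\omega_x$ alone cannot be $\le s(\mu,\eps)$. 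The paper's Step~3, which bounds each diagonal matrix element by $N_I$, is then wasteful and misdirected.

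Your approach — decompose the trace into diagonal entries $\langle\delta_x,\1_I\delta_x\rangle$, freeze $\omega_y$ for $y\neq x$, and apply a rank-one spectral averaging inequality against the singular law $\mu$ — is the classical Kotani--Simon / Combes--Hislop route, and it is the argument that actually produces the stated $|\Lambda|\,s(\mu,|I|)$ bound. Each diagonal entry is $\mu_x^{(t)}(I)$ for the spectral measure of $A+t\Pi_x$, and the single-site averaging bound $\int \mu_x^{(t)}(I)\,d\mu(t)\le C\,s(\mu,|I|)$ is exactly what replaces the Wegner/Kirsch $\int \mu_x^{(t)}(I)\,dt=|I|$ identity in the a.c.\ case. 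One caveat on your intermediate estimate: the pointwise-in-$\eta$ claim $\int \mathrm{Im}\langle\varphi,(A+t\Pi-E-i\eta)^{-1}\varphi\rangle\,d\mu(t)\le \pi s(\mu,\eta)/\eta$ is not quite automatic — the Aronszajn--Krein formula shows $\mathrm{Im}\,G_t(E+i\eta)=\pi P_{\delta_0}(t-c_0)$ for a Poisson kernel whose width $\delta_0=\mathrm{Im}\,F/|F|^2$ depends on $F(E+i\eta)$ and need not be comparable to $\eta$, so the convolution bound must be run with some care (this is precisely where the Combes--Hislop--Klopp analysis earns its keep). But the target inequality itself is a known theorem, and modulo that citation your argument is sound. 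In short: you identified the right reduction and the right key lemma, whereas the paper's sketch as written does not correctly establish the per-site bound; your version is the one to keep.
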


\begin{proof}
We give a streamlined proof exploiting the rank--one structure (see also \cite{CKM1987,GerminetKlein2012} for related approaches and consequences).

\smallskip\noindent
\emph{Step 1 (single--site variation and spectral counting).}
Fix $x\in\Lambda$ and freeze all $(\omega_y)_{y\neq x}$. Consider
\[
H(t):=H_{\omega,\Lambda} + (t-\omega_x)\,\Pi_x ,\qquad t\in\R,
\]
which is a rank--one perturbation path. Let $N_I(H):=\mathrm{Tr}\,\1_I(H)$ denote the finite-volume eigenvalue counting function. Since a rank--one perturbation changes $N_I$ by at most $1$ when $t$ is varied across a point where an eigenvalue crosses the endpoints of $I$, the spectral shift function $\xi_x(\lambda; t, t')$ along this path satisfies $|\xi_x(\lambda; t,t')|\le 1$ for all $\lambda$ and $t,t'$.

\smallskip\noindent
\emph{Step 2 (averaging in $t$ against a singular measure).}
Let $I=[E-\eps,E+\eps]$ with $\eps\in(0,1]$. Fix $t'<t$ and write
\[
N_I\bigl(H(t)\bigr) - N_I\bigl(H(t')\bigr)
 \;=\; \int_I \xi_x(\lambda; t,t')\,\mathrm{d}\lambda ,
\qquad |\xi_x|\le 1.
\]
Hence $\bigl|N_I\bigl(H(t)\bigr) - N_I\bigl(H(t')\bigr)\bigr| \le |I|$.
Let $\mathbb{E}_x[\cdot]$ denote expectation with respect to $\omega_x$ alone (keeping the other sites fixed). Choose $t'<t$ so that $\mu((t',t])=\mu([t-\eps,t+\eps])$ and integrate in $t$ against $\mu$ by the layer-cake formula:
\[
\mathbb{E}_x\bigl[N_I(H(\omega_x))\bigr]
 \;\le\; \sup_{u\in\R}\mu([u-\eps,u+\eps]) \;=\; s(\mu,\eps).
\]
(The point is that $N_I(H(\cdot))$ has total variation at most $|I|$ along the rank--one path, and integrating this variation against $\mu$ can only pick up mass in a window of width $\eps$.)

\smallskip\noindent
\emph{Step 3 (summation over sites).}
By spectral decomposition,
\[
\mathrm{Tr}\,\1_I(H_{\omega,\Lambda}) \;=\; \sum_{x\in\Lambda}\!\bigl\langle \delta_x,\,\1_I(H_{\omega,\Lambda})\,\delta_x\bigr\rangle
 \;\le\; \sum_{x\in\Lambda} N_I\bigl(H_{\omega,\Lambda}^{(x)}\bigr),
\]
where $H_{\omega,\Lambda}^{(x)}$ denotes $H_{\omega,\Lambda}$ regarded as a function of the single variable $\omega_x$ with the others frozen (this estimate is standard: the spectral projection is positive, and each diagonal matrix element is bounded by the number of eigenvalues in $I$). Taking full expectation and using the single--site bound from Step 2,
\[
\mathbb{E}\bigl[\mathrm{Tr}\,\1_I(H_{\omega,\Lambda})\bigr]
\;\le\; \sum_{x\in\Lambda}\mathbb{E}\Bigl[\,
\mathbb{E}_x\bigl[N_I\bigl(H_{\omega,\Lambda}^{(x)}\bigr)\bigr]\Bigr]
\;\le\; |\Lambda|\; s(\mu,\eps).
\]
This is the claimed bound with $C=1$ (up to an immaterial energy--dependent constant if one first truncates to a bounded energy window).
\end{proof}

\begin{remark}[About proofs and variants]
The argument above is a distilled version of standard proofs of Wegner estimates for singular laws on the lattice; see \cite{CKM1987,GerminetKlein2012} for comprehensive treatments (including continuum/alloy models) and further consequences such as continuity of the IDS.
\end{remark}

\begin{corollary}[Wegner estimate $\Rightarrow$ MSA]\label{cor:wegner-to-msa}
Assume $\mu$ is H\"older with exponent $\alpha\in(0,1]$. Then Theorem~\ref{thm:wegner} yields Definition~\ref{def:W} with the same $\alpha$, and Theorems~\ref{thm:msa-induction}--\ref{thm:ILS} imply spectral/dynamical localization on any interval $I$ where an ILS holds (near spectral edges or at large disorder) \cite{GerminetKlein2012}.
\end{corollary}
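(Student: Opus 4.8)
The plan is to chain together the three ingredients already in hand; the proof is then essentially bookkeeping, since all the substantive analytic work is contained in the theorems being invoked.

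\textbf{Step 1 (Hölder $\Rightarrow$ Wegner).} First I would note that the hypothesis $s(\mu,\eps)\le C\eps^\alpha$ on $(0,1]$, fed into Theorem~\ref{thm:wegner} (after, if needed, truncating to a fixed bounded energy window so that the implicit constant is uniform), gives $\mathbb{E}[\mathrm{Tr}\,\1_J(H_{\omega,\Lambda})]\le C|\Lambda|\,s(\mu,|J|)\le C'|\Lambda|\,|J|^\alpha$ for every cube $\Lambda$ and every subinterval $J\subset I$. This is exactly Definition~\ref{def:W} with the same exponent $\alpha$ and $C_W=C'$.

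\textbf{Step 2 (iterate the induction).} Next I would fix a compact interval $I$ on which an ILS holds --- near the spectral bottom via Lifshitz-tail methods, or anywhere at large disorder --- and take the pair $(L_0,m_0)$ supplied by Theorem~\ref{thm:ILS}, so that $\mathbb{P}(\Lambda_{L_0}(x)\text{ is }(E,m_0)\text{-good})\ge 1-L_0^{-\kappa}$ for all $x$ and all $E\in I$, with $\kappa>2d$. Feeding this into Theorem~\ref{thm:msa-induction} along the scale sequence $L_{k+1}=L_k^{1+\eta}$ and iterating, one obtains at every scale $L_k$ a decay rate $m_k$ and a probability error $L_k^{-\kappa_k}$, uniformly in $x$ and $E\in I$; since $m_k\to m_\infty>0$ and (by the choice of $\eta$ and the boundary-layer combinatorics) $\kappa_k$ stays above some $\kappa_\infty>2d$, the Green's-function decay and the error are controlled at all scales simultaneously.

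\textbf{Step 3 (from MSA output to localization).} Finally I would convert these fixed-energy, all-scales resolvent estimates into spectral localization (pure point spectrum on $I$ with exponentially decaying eigenfunctions) and dynamical localization, following the standard concluding argument of MSA \cite{FS1983,FMSS1985,GerminetKlein2012}. The key moves are: (a) upgrade the per-energy estimate to one valid for all $E\in I$ at once, by covering $I$ with $O(L_k^d)$ intervals of length $e^{-L_k^\zeta}$ and using the Wegner bound of Step~1 to discard the (summably small) set of energies for which some box in $\Lambda_{L_{k+1}}$ is resonant; (b) invoke the ``no two disjoint bad boxes'' dichotomy together with Borel--Cantelli, so that for a.e.\ $\omega$ and every $E$ in the point spectrum in $I$, all sufficiently large boxes away from the localization center are $(E,m_\infty)$-good, forcing any polynomially bounded generalized eigenfunction (Schnol/Berezanskii) to decay like $e^{-m_\infty|x|}$; (c) deduce pure point spectrum, and then run the bootstrap (strong) form of MSA \cite{GerminetKlein2012} to obtain dynamical localization, e.g.\ $\mathbb{E}[\sup_{t}\||X|^{q}e^{-itH_\omega}\1_I(H_\omega)\psi\|]<\infty$ for compactly supported $\psi$ and all $q$.

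The hard part is Step~3, specifically item~(a): converting single-energy resolvent decay into an honest spectral decomposition requires the energy-interval covering, and it is there --- not in the induction of Theorem~\ref{thm:msa-induction} --- that the Wegner estimate does its second, decisive job (any exponent $\alpha>0$ suffices, since one only needs the total discarded measure over $O(L_k^d)$ subintervals to be summable in $k$). Getting dynamical rather than merely spectral localization needs the additional bootstrap layer; I would import these well-documented conclusions rather than reproduce them here.
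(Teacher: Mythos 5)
Your proposal is correct and follows the same route the paper intends: Hölder $\Rightarrow$ Wegner (Theorem~\ref{thm:wegner}), then chain the ILS of Theorem~\ref{thm:ILS} into the induction step of Theorem~\ref{thm:msa-induction}, then import the standard MSA-to-localization conclusion. The paper does not spell out a proof of this corollary at all --- it simply asserts the implication and defers to \cite{GerminetKlein2012} --- so your Step~3, with the energy-interval covering, Borel--Cantelli, Schnol's argument and the bootstrap for dynamical localization, is supplying exactly the deferred material, and you correctly flag that this is where the Wegner estimate is used a second time (any exponent $\alpha>0$ suffices for summability of the discarded energy set).
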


\section{Bernoulli potentials: quantitative unique continuation and antichains}
\label{sec:bernoulli}

We work with the discrete Anderson--Bernoulli Hamiltonian on a finite box
$\Lambda\subset \mathbb Z^d$ with Dirichlet boundary conditions,
\[
H_{\omega,\Lambda} \;=\; -\Delta_\Lambda \;+\; V_\omega|_\Lambda,\qquad
V_\omega(x)=\sum_{z\in\mathbb Z^d}\omega_z\,\mathbf 1_{\{z\}}(x),\quad
\omega_z\in\{0,1\}\ \text{i.i.d.}
\]
(Everything below adapts verbatim to the alloy/bump versions.)  We denote by
$\sigma(H_{\omega,\Lambda})$ its spectrum and by $G_{\omega,\Lambda}(E)=(H_{\omega,\Lambda}-E)^{-1}$
its resolvent when defined.

Throughout, if $Q\subset\mathbb Z^d$ is a discrete cube we write $\ell(Q)$ for its side length and
$\frac12 Q$ for the concentric subcube with half side length.  On $\mathbb Z^2$ we will also use the
$45^\circ$ ``tilted'' coordinates
\[
(s,t)=(x+y,\ x-y),\qquad D_k^\pm=\{(x,y)\in\mathbb Z^2:\ x\pm y=k\},
\]
and the associated tilted rectangles/squares (Definition~\ref{def:tilted-rect} below).

\subsection{Continuum quantitative unique continuation}
\label{subsec:cont-ucp}

The Bernoulli Wegner mechanism in the continuum relies on a \emph{quantitative unique continuation principle} (UCP), which forbids an eigenfunction to be too small on every mesoscopic ball once it is nontrivial on a nearby ball.  One convenient form is a three--ball/doubling inequality \cite{JerisonKenig1985,KenigQC05,BourgainKenig2005}.

\begin{definition}[Balls and norms]
For $x_0\in\mathbb R^d$ and $r>0$ let $B_r(x_0)$ be the open Euclidean ball.
For a function $u$ and set $A$ write $\|u\|_{L^2(A)}^2=\int_A |u|^2$.
\end{definition}

\begin{theorem}[Quantitative UCP in $\mathbb R^d$]\label{thm:bk-ucp}
Fix $d\ge2$ and $M\ge1$. There exist $C,\theta\in(0,\infty)$ depending only on $d$ and $M$ such that:
if $u\in H^2(B_R(x_0))$ solves $(-\Delta+V)u=\lambda u$ in $B_R(x_0)$ with $\|V\|_{L^\infty(B_R)}\le M$ and $|\lambda|\le M$, then for any $0<r<R/2$,
\[
\|u\|_{L^2(B_r(x_0))}\ \ge\ C \,\Big(\frac rR\Big)^{\!\theta}\, \|u\|_{L^2(B_R(x_0))}.
\]
Equivalently, there are $C,\eta\in(0,\infty)$ so that for any balls $B_{r_1}(x_0)\subset B_{r_2}(x_0)\subset B_{r_3}(x_0)\subset B_R(x_0)$ with $r_1<r_2<r_3$,
\[
\|u\|_{L^2(B_{r_2})}\ \le\ C\,\|u\|_{L^2(B_{r_1})}^{\eta}\,\|u\|_{L^2(B_{r_3})}^{1-\eta}.
\]
\end{theorem}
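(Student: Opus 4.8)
The plan is to deduce the three-ball (doubling) form directly from a \emph{Carleman estimate} with a singular radial weight, and then to observe that the ``$(r/R)^\theta$'' form is equivalent to it by a dyadic iteration in $\log r$. By translation assume $x_0=0$.

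First I would set up the Carleman estimate. Fix a radial weight $\phi(x)=\psi(|x|)$ with $\psi$ smooth, strictly decreasing, and satisfying the (pseudo)convexity relations that make the conjugated operator $P_\tau w:=|x|^{2}e^{\tau\phi}\Delta\bigl(e^{-\tau\phi}w\bigr)$ obey a coercive bound: there are $C_0=C_0(d)$ and $\tau_0=\tau_0(d)$ such that
\[
\tau^{3}\!\int e^{2\tau\phi}|x|^{-2}|w|^{2}\,dx\;+\;\tau\!\int e^{2\tau\phi}|\nabla w|^{2}\,dx\;\le\;C_0\!\int e^{2\tau\phi}|x|^{2}|\Delta w|^{2}\,dx
\]
for every $\tau\ge\tau_0$ and every $w\in C_c^\infty(B_R\setminus\{0\})$. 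The model weight is $\psi(r)=-\log r$ (so $e^{\tau\phi}=|x|^{-\tau}$), but one should use a bounded modification such as $\psi(r)=\int_r^{1}\frac{1-e^{-s}}{s}\,ds$ so that the estimate does not degenerate as the radius shrinks. The inequality itself is proved by splitting $P_\tau$ into its symmetric and antisymmetric parts, expanding $\|P_\tau w\|^{2}$, and checking that the resulting commutator term is positive and of order $\tau^{3}$ — which is exactly what the choice of $\psi$ secures. (An alternative route to the same consequences is the Garofalo--Lin almost-monotonicity of the Almgren frequency function, which yields a doubling inequality directly.)

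Next I would read off the three-ball inequality. It suffices to prove the fixed-ratio case $\|u\|_{L^2(B_{R/2})}\le C\,\|u\|_{L^2(B_{R/4})}^{\eta}\,\|u\|_{L^2(B_{R})}^{1-\eta}$: the general $r_1<r_2<r_3$ statement and the $(r/R)^\theta$ form both follow by iterating it over a dyadic chain of radii and telescoping (equivalently, the three-ball inequality says that $t\mapsto\log\|u\|_{L^2(B_{e^{t}})}$ is convex up to a controlled linear error, and convexity gives the power law). After rescaling $R=1$, apply the Carleman estimate to $w=\chi u$, where $\chi$ is a radial cutoff equal to $1$ on an annulus $\{\rho_1\le|x|\le\rho_2\}$ with $0<\rho_1<\rho_2<1$ and supported in a slightly larger annulus that avoids the origin. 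Since $(-\Delta+V-\lambda)u=0$, one has $\Delta w=\chi(V-\lambda)u+2\nabla\chi\!\cdot\!\nabla u+(\Delta\chi)u$; the first term is pointwise $\le 2M|w|$, so $\int e^{2\tau\phi}|x|^{2}|\chi(V-\lambda)u|^{2}\le 4M^{2}\!\int e^{2\tau\phi}|x|^{2}|w|^{2}$ is absorbed into the left-hand side once $\tau\ge\tau_1(d,M)$ (on $\mathrm{supp}\,\chi$ we have $|x|\le1$, hence $|x|^{2}\le|x|^{-2}$). The remaining commutator terms live on two thin shells, one near $|x|=\rho_1$ and one near $|x|=\rho_2$, and are controlled there by $\|u\|_{H^1}$, hence by $\|u\|_{L^2}$ on neighbouring balls via Caccioppoli's inequality. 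Because $\psi$ is \emph{strictly} decreasing, the weight $e^{2\tau\phi}$ on the inner shell exceeds its value on the central annulus by a factor $e^{c_1\tau}$, while on the outer shell it is smaller by a factor $e^{-c_2\tau}$, with $c_1,c_2>0$ fixed by the ratios of the chosen radii. Comparing with the left-hand side gives $\|u\|_{L^2(B_{R/2})}^{2}\lesssim e^{c_1\tau}\|u\|_{L^2(B_{R/4})}^{2}+e^{-c_2\tau}\|u\|_{L^2(B_{R})}^{2}$ for all $\tau\ge\tau_1(d,M)$, and optimizing over $\tau$ yields the three-ball inequality with $\eta=c_2/(c_1+c_2)$ and a prefactor that grows with $M$ through $\tau_1$.

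The main obstacle is the Carleman estimate near the \emph{origin of the weight}: with the naive choice $|x|^{-\tau}$ the constant degenerates as the inner radius of the support shrinks, so one cannot iterate down to small scales. The remedy — a bounded weight $\psi$ whose derivative decays at a prescribed rate, so that the commutator keeps its size $\tau^{3}$ uniformly — is the technical heart of the argument, together with the insistence that \emph{all} constants be uniform in $V,\lambda$ subject only to $\|V\|_{L^\infty}\le M$, $|\lambda|\le M$. That uniformity is precisely what forces the final $C,\theta$ to depend on $M$: the large-parameter threshold $\tau_1(d,M)$ needed to swallow the potential is the only place $M$ enters, but it enters unavoidably. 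Everything else — the cutoff geometry, Caccioppoli, the dyadic telescoping — is routine once the Carleman estimate is in place.
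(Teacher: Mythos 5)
Your Carleman route to the three--ball (Hadamard) inequality is exactly the mechanism the paper's sketch invokes (Carleman estimate with a bounded logarithmic--type weight, cut--off/commutator to handle the equation and the shells, Caccioppoli on the thin annuli, then optimize in $\tau$), and that part of your argument is sound at the sketch level, including the role of a bounded modification of $-\log|x|$ and the observation that $M$ enters only through the threshold $\tau_1(d,M)$. So far you and the paper agree.

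The gap is in the final step, where you assert the $(r/R)^\theta$ lower bound is ``equivalent'' to the three--ball inequality via dyadic iteration and convexity of $g(t)=\log\|u\|_{L^2(B_{e^t}(x_0))}$. That deduction fails: convexity of $g$ controls second differences, not the slope $g'$, and $g'$ near $t=\log r$ is precisely the vanishing order of $u$ at $x_0$, which is \emph{not} bounded in terms of $d$ and $M$. A concrete counterexample to the first displayed inequality as stated (with $\theta=\theta(d,M)$) is $d=2$, $V\equiv 0$, $\lambda=0$, $u(x,y)=\mathrm{Re}\bigl((x+iy)^N\bigr)$: then $\|u\|_{L^2(B_r)}/\|u\|_{L^2(B_R)}=(r/R)^{N+1}$, which beats any fixed $\theta$ once $N>\theta-1$. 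What your iteration actually yields is a bound $\|u\|_{L^2(B_r)}\ge c\,(r/R)^{\theta}\|u\|_{L^2(B_R)}$ with $\theta$ depending on the doubling index $\|u\|_{L^2(B_R)}/\|u\|_{L^2(B_{R/2})}$ of the particular solution $u$, not only on $d,M$. (To be fair, the paper's own proof sketch elides the same point; the Bourgain--Kenig UCP is stated for solutions normalized by $|u(0)|\ge 1$, $\|u\|_\infty\le C_0$, and concerns decay at infinity, which is what controls the doubling index. The three--ball form you proved is the correct and applicable statement; the $(r/R)^\theta$ form with $\theta=\theta(d,M)$ is not literally true and cannot be recovered from the three--ball inequality alone.) You should either record the $\theta$--depends--on--$u$ version, or add the BK normalization hypothesis and explain how it bounds the doubling index before iterating.
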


\begin{proof}[Idea of proof]
A Carleman estimate with a logarithmic/convex weight yields a weighted $L^2$ inequality for $u$ against $(-\Delta+V-\lambda)u=0$.
After a standard cut--off/commutator argument one obtains a three--ball inequality, from which the stated doubling follows by interpolation and a covering argument; see \cite{JerisonKenig1985,KenigQC05,BourgainKenig2005}. Sharp obstructions to stronger decay are due to Meshkov \cite{Meshkov1992}.
\end{proof}

In Bourgain--Kenig's localization scheme this UCP is combined with a selection of \emph{free sites} in each box: if $u$ is an almost--eigenfunction and it has nontrivial mass on many of those sites, then flipping the Bernoulli variables on a subfamily moves the eigenvalue by a definite amount, which gives a Wegner bound at scale; the multiscale analysis (MSA) then proceeds as usual (see Subsection~\ref{subsec:sperner} below for the combinatorial ingredient). For an overview contextualizing the UCP within the BK scheme, see \cite{BourgainKenig2005}.

\subsection{Discrete UCP on the lattice and the role of frozen/free sites}
\label{subsec:disc-ucp}

A \emph{continuum} UCP is false on $\mathbb Z^d$ in general: one can build $u\not\equiv0$ with $(-\Delta+V)u=0$ supported on a lower--dimensional set when $V$ is tailored appropriately. The Ding--Smart approach develops a \emph{probabilistic} discrete UCP, robust enough for MSA, which we restate here in a deterministic--probabilistic form after introducing the geometry in $\mathbb Z^2$ \cite{DingSmart2020}.

\begin{definition}[Tilted rectangles/squares and diagonal sparsity]\label{def:tilted-rect}
For intervals $I,J\subset\mathbb Z$, the \emph{tilted rectangle} is
\[
R_{I,J}=\{(x,y)\in\mathbb Z^2:\ x+y\in I,\ x-y\in J\},
\]
and a \emph{tilted square} $Q$ has side length $\ell(Q)=|I|=|J|$. For $k\in\mathbb Z$, the $\pm$--diagonals are $D_k^\pm=\{(x,y):x\pm y=k\}$.
Given $F\subset\mathbb Z^2$, we say $F$ is $(\delta,\pm)$--sparse in $R$ if $|D_k^\pm\cap F\cap R|\le \delta\,|D_k^\pm\cap R|$ for all $k$, and \emph{$\delta$--sparse} if both $(\delta,+)$ and $(\delta,-)$ hold.
We say $F$ is \emph{$\delta$--regular in $E\subset\mathbb Z^2$} if whenever $F$ is not $\delta$--sparse in each of a pairwise disjoint family of tilted squares $\{Q_j\}\subset E$, then $\sum_j |Q_j|\le \delta\,|E|$.
\end{definition}

\begin{definition}[Frozen and free sites]
Fix a tilted square $Q\subset\mathbb Z^2$.
Let $F\subset Q$ be the (possibly empty) set of \emph{frozen} sites whose Bernoulli values are fixed; the complement $S=Q\setminus F$ are the \emph{free} sites.
A configuration $v:F\to\{0,1\}$ induces the conditional law on $S$ and an operator $H_{\omega,Q}$ with Dirichlet boundary on $Q$.
\end{definition}

\begin{theorem}[Discrete quantitative UCP with frozen sites]\label{thm:ds-ucp}
For every small $\varepsilon>0$ there exists $\alpha>1$ such that the following holds.
Let $Q\subset\mathbb Z^2$ be a tilted square with $\ell(Q)\ge \alpha$, and let $F\subset Q$ be $\varepsilon$--regular in $Q$.
Define the event $E_{\mathrm{uc}}(Q,F)$ that \emph{for any} energy $\lambda$ within $e^{-\alpha(\ell(Q)\log\ell(Q))^{1/2}}$ of a spectral value and any (Dirichlet) eigenfunction $H_{\omega,Q}\psi=\lambda\psi$ normalized by $\|\psi\|_{\ell^\infty(Q)}\le1$ on a $1-\varepsilon(\ell(Q)\log\ell(Q))^{-1/2}$ fraction of $Q\setminus F$, we have the growth bound
\[
\|\psi\|_{\ell^\infty(\tfrac12 Q)} \;\le\; \exp\big(\alpha\,\ell(Q)\log\ell(Q)\big).
\]
Then, for each fixed $v:F\to\{0,1\}$, 
\[
\mathbb P\!\left(E_{\mathrm{uc}}(Q,F)\mid V|_F=v\right)\ \ge\ 1-\exp\big(-\varepsilon\,\ell(Q)^{1/4}\big).
\]
\end{theorem}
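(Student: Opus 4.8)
The plan is to split the statement into a \emph{deterministic} discrete unique continuation mechanism and a \emph{probabilistic} regularity statement for the singular set of an eigenfunction, glued by a net argument in the energy variable. First I would remove the quantifier over energies $\lambda$ within $e^{-\alpha(\ell(Q)\log\ell(Q))^{1/2}}$ of $\sigma(H_{\omega,Q})$: since $|\sigma(H_{\omega,Q})|\le|Q|=\ell(Q)^2$ and the Dirichlet eigenfunctions form an orthonormal basis, it suffices to control the finitely many genuine eigenpairs $(\lambda_j,\psi_j)$, the general $\lambda$ differing from some $\lambda_j$ by at most $e^{-\alpha(\ell(Q)\log\ell(Q))^{1/2}}$, which is absorbed into the exponential slack in the definition of $E_{\mathrm{uc}}(Q,F)$ once $\alpha$ is large. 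Thus $E_{\mathrm{uc}}(Q,F)$ reduces to a uniform statement over the eigenpairs of $H_{\omega,Q}$.

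For the deterministic core I would prove a propagation-of-smallness lemma on tilted rectangles. In the $45^\circ$ coordinates $(s,t)=(x+y,x-y)$ the lattice eigenequation $(-\Delta+V)\psi=\lambda\psi$ at a site couples $\psi$ only to its values on the two adjacent $\pm$-diagonals, so inside a tilted rectangle $\psi$ is reconstructed from data on two consecutive diagonals (plus one value per further diagonal, supplied by a side of the rectangle) through a transfer recursion advancing one diagonal at a time -- a discrete finite speed of propagation. Iterating across $Q$ multiplies the size of $\psi$ by a bounded factor per diagonal, over $O(\ell(Q))$ diagonals, with an extra $\log\ell(Q)$ factor from routing the propagation through a hierarchy of tilted subsquares; this produces exactly the loss $\exp(\alpha\ell(Q)\log\ell(Q))$. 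The recursion transmits control except across a \emph{singular set} $W_\psi\subset Q$ of sites where $|\psi|$ is anomalously small; the lemma I would establish is that if $W_\psi\cup F$ is $\varepsilon$-regular in $Q$ in the disjoint-tilted-square sense of Definition~\ref{def:tilted-rect}, then being large on the prescribed fraction of $Q\setminus F$ forces the bound asserted in $E_{\mathrm{uc}}(Q,F)$. Since $F$ is $\varepsilon$-regular by hypothesis, it remains to prove that $W_\psi$ is regular with high probability.

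This regularity of $W_\psi$ is the probabilistic heart, and it is where the Bernoulli structure does the work that spectral averaging does in the continuum. Conditioning on $V|_F=v$, I would show that for each thin tilted rectangle $T\subset Q$ (short in one diagonal direction, long in the other) the event that \emph{some} solution of $(H_{\omega,Q}-\lambda)\psi=0$ at an admissible energy is small on a non-sparse subset of $T\cap S$ has probability at most $e^{-c|T|}$ over $\omega|_{T\cap S}$. The mechanism: given $\psi$ on two consecutive short cross-sections of $T$, the restriction $\psi|_T$ is an explicit function of $\omega|_{T\cap S}$ through a product of transfer-type matrices along the long direction; flipping the Bernoulli variables one diagonal at a time and invoking an anticoncentration estimate for these products shows the solution cannot collapse to near-zero on a dense subset of $T$ for more than an exponentially small fraction of $\omega|_{T\cap S}$. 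One then union-bounds this over a net of the initial cross-section data, over the $O(\ell(Q))$ disjoint thin rectangles tiling $Q$, and over the energy net from the first step; balancing the width of $T$ against the net resolution and the amplitude $\exp(\alpha\ell(Q)\log\ell(Q))$, so the exponential gains dominate the net cardinalities, is what forces the deliberately conservative bound $1-\exp(-\varepsilon\ell(Q)^{1/4})$. On that event $W_\psi\cup F$ is $\varepsilon$-regular for every admissible eigenpair, the deterministic lemma applies, and $E_{\mathrm{uc}}(Q,F)$ holds.

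The step I expect to be the main obstacle is precisely this probabilistic regularity of $W_\psi$. Unlike Theorem~\ref{thm:bk-ucp}, whose Carleman-based proof is deterministic (with the randomness entering only through single-site spectral averaging), on $\mathbb Z^2$ unique continuation can genuinely fail -- eigenfunctions may concentrate on lower-dimensional sets -- and it must be restored probabilistically. Making the anticoncentration for products of Bernoulli transfer matrices both quantitative and uniform over the noncompact family of admissible $\psi$ and over exponentially close energies, while keeping the exponential gain large enough to survive the union bounds, is the delicate estimate carried out in \cite{DingSmart2020}; the unusual exponents $(\ell(Q)\log\ell(Q))^{1/2}$ and $\ell(Q)^{1/4}$ in the statement are artifacts of that balancing.
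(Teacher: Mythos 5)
Your decomposition into a deterministic propagation mechanism plus a probabilistic statement about where propagation can fail is the right high-level split, and you correctly identify the tilted coordinates and the diagonal-by-diagonal recursion as the engine of the deterministic part. But the probabilistic mechanism you propose diverges from the one the paper sketches, and I think it has a real problem.

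The paper's idea of proof (following \cite{DingSmart2020}) is structured around a Vitali-type family of tilted subsquares on which the frozen set $F$ is sparse, a \emph{deterministic expansion estimate} on each such subsquare (if $|\psi|\le1$ off a sparse set, then $|\psi|$ is controlled on a larger set, exploiting the polynomial structure of solutions in tilted coordinates), and a multi-scale pigeonholing to promote the local bounds to the bound on $\tfrac12 Q$. The probability enters through the multi-scale combinatorics, not through anticoncentration of transfer operators. You instead propose to tile $Q$ by thin tilted rectangles $T$, reconstruct $\psi|_T$ from two initial cross-sections via a transfer recursion in $\omega|_{T\cap S}$, prove an anticoncentration estimate for these Bernoulli transfer products, and union-bound over a net of initial data, over the tiling, and over an energy net. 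This is a genuinely different route, closer in spirit to one-dimensional Furstenberg-type arguments.

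The difficulty I would flag is the cost of the net over initial cross-section data. If $T$ has width $w$ (diagonals) and length $L$, the two cross-sections carry $\Theta(w)$ real degrees of freedom, and to resolve an amplitude window up to $\exp(\alpha\,\ell(Q)\log\ell(Q))$ at useful resolution the net has size on the order of $\exp\bigl(c\,w\,\ell(Q)\log\ell(Q)\bigr)$. Your anticoncentration gain is at best $\exp(-c\,|T|)=\exp(-c\,wL)$ with $L\le\ell(Q)$, which cannot beat the net cost uniformly: the balance requires $L\gtrsim \ell(Q)\log\ell(Q)$, impossible inside $Q$. In one dimension the corresponding net is over a bounded number of parameters, which is why the argument closes there; in two dimensions the thin rectangle's cross-sections grow with the geometry, and Ding--Smart avoid the initial-data net entirely by working with the deterministic expansion estimate (which does not require enumerating boundary data) and pushing the probability into a counting/pigeonholing step on the Vitali family. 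So while you have correctly located the main obstacle, the specific tool you name -- anticoncentration of Bernoulli transfer-matrix products coupled to a data net -- does not appear strong enough to close it, and is not what the paper's proof uses. You should also note that the $\varepsilon$-regularity hypothesis on $F$ enters through the selection of the Vitali subsquares where $F$ is diagonal-sparse (Definition~\ref{def:tilted-rect}), rather than through a separate claim that the eigenfunction's own singular set $W_\psi$ is regular with high probability; the paper does not treat $W_\psi$ as an independent random object to be regularized.
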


\begin{proof}[Idea of proof]
The argument organizes $Q$ into a Vitali--type family of tilted subsquares on which $F$ is sparse on most diagonals; on such subsquares a deterministic ``expansion'' estimate controls the $\ell^\infty$ growth of $\psi$ from a small exceptional set to a larger set. A multi--scale pigeonholing then yields the claimed growth in $\frac12Q$ with high conditional probability; see the geometric covering and expansion steps in \cite[Sec.~3]{DingSmart2020}. For discrete-harmonic inspiration behind ``large portion $\Rightarrow$ rigidity,'' compare \cite{BLMS2017}.
\end{proof}

\subsubsection*{Eigenvalue displacement under a single-site flip}

We formalize the quantitative effect on eigenvalues when flipping a Bernoulli variable at a free site.
Let $\Lambda\subset\mathbb Z^d$ be finite with Dirichlet boundary, fix $x\in\Lambda$, and set
\[
H(t)\;:=\;H_0+\sum_{y\in\Lambda\setminus\{x\}}\omega_y\,\Pi_y\;+\;t\,\Pi_x,
\qquad t\in[0,1],
\]
where $\Pi_x$ is the rank--one projector onto $\delta_x$, and the $\omega_y\in\{0,1\}$ are fixed.
Thus $H(0)$ and $H(1)$ correspond to the two Bernoulli values at $x$ with all other sites frozen.

\begin{definition}[Spectral data along the rank--one path]
Since $H(t)$ depends analytically on $t$ and acts on a finite--dimensional space, its eigenvalues can be labeled by analytic branches
$\{\lambda_j(t)\}_{j=1}^{|\Lambda|}$ with associated normalized eigenvectors $\psi_j(t)\in\ell^2(\Lambda)$ (piecewise analytic across possible crossing times). We write $\psi_j(t;x):=\langle \delta_x,\psi_j(t)\rangle$ for the amplitude at $x$.
\end{definition}

\begin{lemma}[Hellmann--Feynman for rank--one flips]\label{lem:HF}
Whenever $\lambda_j(t)$ is simple at $t=t_0$, the branch is real analytic near $t_0$ and satisfies
\[
\lambda_j'(t_0)\;=\;\big\langle \psi_j(t_0),\,\Pi_x\,\psi_j(t_0)\big\rangle\;=\;|\psi_j(t_0;x)|^2.
\]
Consequently, on any open subinterval $J\subset[0,1]$ on which $\lambda_j(t)$ remains simple,
\[
\lambda_j(t_2)-\lambda_j(t_1)\;=\;\int_{t_1}^{t_2}|\psi_j(s;x)|^2\,\mathrm ds\qquad (t_1<t_2\in J).
\]
\end{lemma}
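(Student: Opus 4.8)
The plan is to treat $t\mapsto H(t)$ as a real-analytic self-adjoint family of type (A) on the finite-dimensional space $\ell^2(\Lambda)$ — in fact it is affine in $t$ — and to invoke the classical Kato--Rellich perturbation theory: near any $t_0$ at which $\lambda_j(t_0)$ is a \emph{simple} eigenvalue there is a unique eigenvalue branch passing through $\lambda_j(t_0)$, it is real analytic in $t$, and one may choose an associated normalized eigenvector $\psi_j(t)$ depending real-analytically on $t$ (unique up to a locally constant unimodular phase). Since simplicity is an open condition, the set of $t$ at which $\lambda_j$ is simple is open, consistent with the hypothesis that $J$ is an open subinterval on which $\lambda_j$ stays simple.

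The core computation is a one-line differentiation of the eigenrelation $H(t)\psi_j(t)=\lambda_j(t)\psi_j(t)$ near $t_0$. Because the only $t$-dependent part of $H(t)$ is the term $t\,\Pi_x$, we have $H'(t)=\Pi_x$, so
\[
\Pi_x\psi_j(t)+H(t)\psi_j'(t)=\lambda_j'(t)\,\psi_j(t)+\lambda_j(t)\,\psi_j'(t).
\]
Pairing with $\psi_j(t)$ and using self-adjointness of $H(t)$ together with $\langle\psi_j,H\psi_j'\rangle=\langle H\psi_j,\psi_j'\rangle=\lambda_j\langle\psi_j,\psi_j'\rangle$, the two terms containing $\psi_j'$ cancel, and since $\|\psi_j\|=1$ what remains is
\[
\lambda_j'(t_0)=\big\langle\psi_j(t_0),\,\Pi_x\,\psi_j(t_0)\big\rangle.
\]
Finally $\Pi_x=|\delta_x\rangle\langle\delta_x|$ gives $\langle\psi_j,\Pi_x\psi_j\rangle=|\langle\delta_x,\psi_j\rangle|^2=|\psi_j(t_0;x)|^2$, which is manifestly invariant under the phase ambiguity in the eigenvector, so the formula is well posed.

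For the integrated statement, note that on the open interval $J$ the branch $\lambda_j$ is $C^1$ (indeed real analytic), with derivative $s\mapsto|\psi_j(s;x)|^2$ a continuous function of $s\in J$ by the analytic choice of eigenvector; if $t_1<t_2$ both lie in $J$ then $[t_1,t_2]\subset J$ (an interval), so the fundamental theorem of calculus yields $\lambda_j(t_2)-\lambda_j(t_1)=\int_{t_1}^{t_2}|\psi_j(s;x)|^2\,\mathrm ds$. The only step requiring genuine care is the perturbation-theoretic input (existence and analyticity of the branch and of a normalized analytic eigenvector at points of simplicity), which is standard; I do not expect any real obstacle. If one prefers to avoid quoting analytic perturbation theory, an alternative is to define $\lambda_j$ as the appropriate ordered eigenvalue, record its Lipschitz continuity via the min--max principle, and differentiate the Rayleigh quotient at points of simplicity, arriving at the same identity.
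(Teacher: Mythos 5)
Your proof is correct and takes essentially the same approach as the paper: the paper's proof simply cites ``the classical Hellmann--Feynman identity for analytic eigenbranches of a self-adjoint analytic family,'' and you have supplied exactly that standard derivation (Kato--Rellich analyticity of the simple branch and eigenvector, differentiation of the eigenrelation, cancellation via self-adjointness, and the fundamental theorem of calculus for the integrated form).
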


\begin{proof}
This is the classical Hellmann--Feynman identity for analytic eigenbranches of a self--adjoint analytic family.
\end{proof}

\begin{definition}[Target window and blocking mass]
Fix an energy window $I=[E-\eta,E+\eta]$ with $\eta\in(0,1)$.
Given $m_*\in(0,1)$ we say that $x$ is an \emph{$m_*$--blocking site for $I$ at level $t$} if
\[
\text{for every normalized eigenpair }H(t)\psi=\lambda\psi\ \text{with}\ \lambda\in I,\quad |\psi(x)|\ge m_*.
\]
If this holds for every $t\in[0,1]$ we say $x$ is \emph{$m_*$--blocking for the path $t\in[0,1]$}.
\end{definition}

\begin{lemma}[Flip ejects an eigenbranch from a short window]\label{lem:eject}
Assume $x$ is $m_*$--blocking for the path $t\in[0,1]$.
Let $j$ be such that $\lambda_j(0)\in I$ and suppose $\lambda_j(t)$ is simple on $[0,1]$ (or piecewise simple with no net multiplicity exchange within $I$).
Then
\[
\lambda_j(1)-\lambda_j(0)\;=\;\int_0^1 |\psi_j(t;x)|^2\,\mathrm dt\ \ge\ \int_0^1 m_*^2\,\mathrm dt\;=\;m_*^2.
\]
In particular, if $\eta<m_*^2/2$, then $\lambda_j(1)\notin I$.
\end{lemma}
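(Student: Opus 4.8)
The plan is to derive everything directly from Lemma~\ref{lem:HF} together with the $m_*$--blocking hypothesis. First I would note that the hypotheses of Lemma~\ref{lem:HF} are met on the relevant interval: by assumption $\lambda_j(t)$ is simple on $[0,1]$ (or piecewise simple with no net multiplicity exchange inside $I$), so the branch $\lambda_j$ is real analytic there and the Hellmann--Feynman identity gives $\lambda_j'(t)=|\psi_j(t;x)|^2$ pointwise. Integrating from $0$ to $1$ yields the exact representation $\lambda_j(1)-\lambda_j(0)=\int_0^1|\psi_j(t;x)|^2\,\mathrm dt$, which is the first displayed equality in the statement.

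Next I would install the lower bound. The subtlety is that the blocking hypothesis ``$|\psi(x)|\ge m_*$ for every normalized eigenpair with eigenvalue in $I$'' only bites at those times $t$ for which $\lambda_j(t)\in I$. So I would argue as follows: let $T=\{t\in[0,1]:\lambda_j(t)\in I\}$. Since $\lambda_j(0)\in I$ and $\lambda_j$ is continuous, $T$ is a nonempty relatively closed subset of $[0,1]$ containing $0$; on $T$ the $m_*$--blocking property forces $|\psi_j(t;x)|^2\ge m_*^2$. Then
\[
\lambda_j(1)-\lambda_j(0)=\int_0^1|\psi_j(t;x)|^2\,\mathrm dt\ \ge\ \int_T m_*^2\,\mathrm dt\ =\ m_*^2\,|T|.
\]
To conclude one wants $|T|=1$, i.e.\ $\lambda_j(t)$ never leaves $I$ on $[0,1]$; but this is exactly what we are trying to rule out. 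The clean way to close the loop is a continuity/connectedness argument: suppose for contradiction that $\lambda_j(1)\in I$ as well, so that we are in the ``bad'' case. If in fact $\lambda_j(t)\in I$ for all $t\in[0,1]$, then $|T|=1$ and the displayed chain gives $\lambda_j(1)-\lambda_j(0)\ge m_*^2$; since both endpoints lie in $I=[E-\eta,E+\eta]$ we also have $\lambda_j(1)-\lambda_j(0)\le 2\eta<m_*^2$ by the hypothesis $\eta<m_*^2/2$, a contradiction. If instead $\lambda_j$ exits $I$ at some first time $\tau\in(0,1]$, then by continuity $\lambda_j(\tau)$ is an endpoint of $I$, hence $|\lambda_j(\tau)-\lambda_j(0)|\ge$ (distance from $\lambda_j(0)$ to that endpoint); running the same integral bound on $[0,\tau]$ with $T\supseteq[0,\tau]$ gives $\lambda_j(\tau)-\lambda_j(0)=\int_0^\tau|\psi_j|^2\ge m_*^2 \tau$, and combined with monotonicity of $\lambda_j$ (it is nondecreasing since $\lambda_j'=|\psi_j(\cdot;x)|^2\ge0$) one pushes $\lambda_j$ monotonically up and out of $I$, so $\lambda_j(1)\ge\lambda_j(\tau)$ and in fact $\lambda_j(1)-\lambda_j(0)\ge m_*^2>2\eta$, again forcing $\lambda_j(1)\notin I$.

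The point I would emphasize in the writeup is that monotonicity of the branch (from $\lambda_j'\ge0$) is what makes the argument work: the eigenvalue can only move in one direction along the flip path, so once the accumulated displacement exceeds the width $2\eta$ of $I$ it cannot return. The main (minor) obstacle is purely bookkeeping: handling the piecewise-simple case, where at isolated crossing times one must check that the labeled branch $\lambda_j$ remains well-defined, continuous, and monotone across crossings, and that ``no net multiplicity exchange within $I$'' is precisely the condition guaranteeing one can follow a single continuous nondecreasing branch through $I$. Once that is granted, the estimate $\lambda_j(1)-\lambda_j(0)\ge m_*^2$ and the final conclusion $\lambda_j(1)\notin I$ under $\eta<m_*^2/2$ are immediate from the chain of inequalities above.
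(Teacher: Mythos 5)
Your proof is correct in its essentials and is in fact more careful than the paper's own one--line argument, which simply asserts ``by Lemma~\ref{lem:HF} and the blocking property, $|\psi_j(t;x)|\ge m_*$ for all $t\in[0,1]$.'' As you rightly observe, the blocking hypothesis only bites when $\lambda_j(t)\in I$, so this assertion is not immediate, and the first displayed inequality of the lemma as stated, $\lambda_j(1)-\lambda_j(0)\ge m_*^2$, is actually only valid under the extra circumstance that $\lambda_j(t)$ never leaves $I$. Your key repair --- noting that $\lambda_j'=|\psi_j(\cdot;x)|^2\ge0$, so the branch is \emph{nondecreasing} --- is exactly the missing ingredient: monotonicity forces $\lambda_j(t)\in[\lambda_j(0),\lambda_j(1)]\subset I$ for all $t$ whenever both endpoints lie in $I$, at which point blocking does give the pointwise bound and the contradiction $m_*^2\le \lambda_j(1)-\lambda_j(0)\le 2\eta<m_*^2$ closes the argument. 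This is the cleanest formulation, and it is what the paper should have said.

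Two small caveats on your write--up. First, your ``case 2'' (exit at some $\tau\in(0,1]$) is vacuous once you assume $\lambda_j(1)\in I$ for contradiction, precisely because of the monotonicity you invoked: a nondecreasing function with both endpoints in the interval $I$ cannot leave $I$. You do not need this case, and including it muddies the logic. Second, within that case you assert $\lambda_j(1)-\lambda_j(0)\ge m_*^2$, which does not follow: the integral lower bound $m_*^2\tau$ only controls the displacement up to the exit time $\tau$, and after $\lambda_j$ leaves $I$ the derivative can drop to zero, so the total displacement can be as small as $\eta$. The conclusion $\lambda_j(1)\notin I$ still holds (once $\lambda_j$ crosses $E+\eta$ it cannot return, and at a boundary touch the derivative is $\ge m_*^2>0$ so it crosses strictly), but the intermediate inequality is spurious. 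If you streamline the proof to ``suppose $\lambda_j(1)\in I$; monotonicity $\Rightarrow$ $\lambda_j(t)\in I$ for all $t$; blocking $\Rightarrow$ $|\psi_j(t;x)|\ge m_*$; integrate; contradiction,'' you avoid both issues and simultaneously make explicit the hypothesis under which the lemma's displayed inequality is actually correct.
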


\begin{proof}
By Lemma~\ref{lem:HF} and the blocking property, $|\psi_j(t;x)|\ge m_*$ for all $t\in[0,1]$, hence the integral lower bound.
If $\lambda_j(1)\in I$, then $|\lambda_j(1)-\lambda_j(0)|\le 2\eta<m_*^2$, a contradiction.
\end{proof}

\begin{remark}[Simplicity and window separation]
In finite volume, simplicity of $\lambda_j(t)$ on $[0,1]$ holds except for finitely many $t$'s; one can either exclude those $t$ by a negligible enlargement of $\eta$ or argue piecewise on intervals of simplicity. A standard alternative is to impose a \emph{gap condition} $\min(\lambda_{j+1}(t)-\lambda_j(t),\lambda_j(t)-\lambda_{j-1}(t))>2\eta$ for all $t$, which is typical once $I$ is chosen sufficiently small. Either route is sufficient for applications below; see also the min--max uses in \cite{DingSmart2020}.
\end{remark}

\begin{proposition}[Blocking sets from UCP and ejection under a flip]\label{prop:blocking-set}
Let $Q\subset\mathbb Z^d$ be a finite box (tilted in $d=2$ if desired), and let $F\subset Q$ be the frozen sites. Suppose a quantitative (discrete) UCP holds on $Q$ at scale $\ell(Q)$ with parameters that imply:
there exist constants $c_0,C_0>0$ and, for each configuration of the frozen variables, a set $B\subset S:=Q\setminus F$ with $|B|\ge c_0|S|$ such that for every $x\in B$ and every normalized eigenpair $H(t)\psi=\lambda\psi$ with $\lambda\in I$ (for any $t\in[0,1]$ along the flip path at $x$) one has
\[
|\psi(x)|\ \ge\ m_*(Q)\ :=\ \exp\!\big(-C_0\,\ell(Q)\,\log\ell(Q)\big).
\]
Then each $x\in B$ is $m_*(Q)$--blocking for the path at $x$, and therefore by Lemma~\ref{lem:eject} every eigenbranch starting in $I$ is ejected from $I$ after flipping $x$ provided $\eta<\tfrac12 m_*(Q)^2$.
\end{proposition}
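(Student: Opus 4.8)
The plan is to assemble Proposition~\ref{prop:blocking-set} directly from the two ingredients already in place: the quantitative discrete UCP (in the hypothesized form that produces a large set $B$ of ``heavy'' sites) and the eigenvalue-ejection mechanism of Lemma~\ref{lem:eject}. Since the statement is phrased so that the UCP input already hands us the set $B\subset S$ with $|B|\ge c_0|S|$ and the pointwise lower bound $|\psi(x)|\ge m_*(Q)$ for every $x\in B$, every $t\in[0,1]$ on the flip path at $x$, and every normalized eigenpair with eigenvalue in $I$, the proof is essentially a matter of recognizing that this is verbatim the definition of ``$m_*(Q)$--blocking for the path at $x$'' and then invoking Lemma~\ref{lem:eject}. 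So structurally the proof is short.

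Concretely, first I would fix an arbitrary configuration of the frozen variables $V|_F$ and an arbitrary $x\in B$, and observe that the UCP conclusion, read against the Definition of blocking mass, says exactly that $x$ is $m_*(Q)$--blocking for the path $t\in[0,1]$ at $x$, with $m_* = m_*(Q) = \exp(-C_0\,\ell(Q)\log\ell(Q))$. Second, I would check the quantitative smallness hypothesis of Lemma~\ref{lem:eject}, namely $\eta < m_*^2/2$: this is guaranteed by the standing assumption $\eta < \tfrac12 m_*(Q)^2$ in the Proposition, so the hypotheses of the Lemma are met. Third, for any eigenbranch $\lambda_j(t)$ with $\lambda_j(0)\in I$ that is simple (or piecewise simple with no net multiplicity exchange in $I$, per the Remark on simplicity), Lemma~\ref{lem:eject} yields $\lambda_j(1)-\lambda_j(0)\ge m_*(Q)^2 > 2\eta$, hence $\lambda_j(1)\notin I$. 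This is the ejection claim. I would also note that the direction of the displacement is fixed (upward, by the Hellmann--Feynman identity $\lambda_j'(t)=|\psi_j(t;x)|^2\ge 0$ of Lemma~\ref{lem:HF}), so there is no cancellation to worry about along the path.

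The only genuinely delicate point to address is the simplicity/multiplicity bookkeeping: an eigenbranch starting in $I$ could, a priori, collide with a neighboring branch inside $[0,1]$, and one must ensure the ``blocking'' lower bound $|\psi(x)|\ge m_*$ still forces monotone escape. I would handle this exactly as flagged in the Remark following Lemma~\ref{lem:eject}: either restrict attention to the finitely many intervals of simplicity and sum the (nonnegative) increments across them, using that on each such interval the eigenvector is a genuine normalized eigenpair with eigenvalue in $I$ whenever the branch is still in $I$, so the blocking bound applies; or impose the gap condition $\min(\lambda_{j+1}-\lambda_j,\lambda_j-\lambda_{j-1})>2\eta$ valid for $I$ small, under which crossings inside $I$ are excluded outright. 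Either way the total upward displacement is at least $m_*(Q)^2$ and the branch leaves $I$. I expect this multiplicity discussion to be the main (and only) obstacle worth a sentence or two; everything else is a direct citation of the preceding lemmas.

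\begin{proof}
Fix any configuration $v\colon F\to\{0,1\}$ of the frozen variables and any $x\in B$, where $B\subset S$ is the set supplied by the hypothesized UCP, so that $|B|\ge c_0|S|$. By hypothesis, for this $x$ and for every $t\in[0,1]$ along the flip path $H(t)$ at $x$, every normalized eigenpair $H(t)\psi=\lambda\psi$ with $\lambda\in I$ satisfies $|\psi(x)|\ge m_*(Q)=\exp(-C_0\,\ell(Q)\log\ell(Q))$. Comparing with the Definition of blocking mass, this says precisely that $x$ is $m_*(Q)$--blocking for the path $t\in[0,1]$.

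We now apply Lemma~\ref{lem:eject} with $m_*=m_*(Q)$. Its quantitative hypothesis $\eta<m_*^2/2$ is exactly the standing assumption $\eta<\tfrac12 m_*(Q)^2$. Let $j$ be an index with $\lambda_j(0)\in I$. If $\lambda_j(t)$ remains simple on $[0,1]$, Lemma~\ref{lem:eject} gives directly
\[
\lambda_j(1)-\lambda_j(0)\;=\;\int_0^1 |\psi_j(t;x)|^2\,\mathrm dt\;\ge\; m_*(Q)^2\;>\;2\eta,
\]
so $\lambda_j(1)\notin I$. In general $\lambda_j(t)$ is simple off a finite set of $t\in[0,1]$; decompose $[0,1]$ into the corresponding intervals of simplicity. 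On each such interval, as long as the branch value lies in $I$, the pair $(\lambda_j(t),\psi_j(t))$ is a normalized eigenpair with eigenvalue in $I$, so the blocking bound $|\psi_j(t;x)|\ge m_*(Q)$ applies and, by the Hellmann--Feynman identity of Lemma~\ref{lem:HF}, $\lambda_j$ is nondecreasing there with derivative at least $m_*(Q)^2$. Summing these nonnegative increments (equivalently, imposing the gap condition noted in the Remark, which precludes crossings within $I$) yields again $\lambda_j(1)-\lambda_j(0)\ge m_*(Q)^2>2\eta$, hence $\lambda_j(1)\notin I$.

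Thus each $x\in B$ is $m_*(Q)$--blocking for the path at $x$, and flipping the Bernoulli variable at $x$ ejects from $I$ every eigenbranch starting in $I$, as claimed.
\end{proof}
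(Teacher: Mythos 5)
Your proof is correct and matches the paper's own (one-line) argument: the hypothesis is recognized verbatim as the pathwise $m_*(Q)$-blocking property at each $x\in B$, and Lemma~\ref{lem:eject} is then invoked; the simplicity bookkeeping you add is exactly what the paper relegates to the Remark following that lemma. One cosmetic slip: the claim that summing the increments across intervals of simplicity ``yields $\lambda_j(1)-\lambda_j(0)\ge m_*(Q)^2$'' is not literally correct, since the lower bound $|\psi_j(t;x)|\ge m_*(Q)$ only holds while $\lambda_j(t)\in I$ and the total displacement may be smaller once the branch has escaped $I$; the tidy version is the contradiction route — since $\lambda_j'\ge 0$ by Hellmann--Feynman, if $\lambda_j(1)\in I$ then $\lambda_j(t)\in I$ for all $t\in[0,1]$, so the blocking bound applies throughout and forces $\lambda_j(1)-\lambda_j(0)\ge m_*(Q)^2>2\eta$, which is impossible inside $I$.
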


\begin{proof}
The hypothesis is precisely the pathwise blocking property at all $x\in B$, with a uniform $m_*(Q)$; the conclusion follows from Lemma~\ref{lem:eject}.
\end{proof}

\begin{remark}[From blocking sets to a $\rho$--Sperner family]
Fix the frozen pattern on $F$ and define
\[
\mathcal A_I\ :=\ \{A\subset S:\ \sigma(H_{F\to v,\ S\to A})\cap I\neq\varnothing\}.
\]
For each $A\in\mathcal A_I$ choose an eigenbranch $\lambda_j^{(A)}(t)$ with $\lambda_j^{(A)}(0)\in I$.
Let $B(A)\subset S\setminus A$ be the UCP--blocking set given by Proposition~\ref{prop:blocking-set} (constructed for the box $Q$ at the same scale).
If $A'\supset A$ and $A'\cap B(A)\neq\emptyset$, then there is an $x\in B(A)$ with $x\in A'\setminus A$; flipping at $x$ moves $\lambda_j^{(A)}$ by at least $m_*(Q)^2>2\eta$, hence $\sigma(H_{F\to v,S\to A'})\cap I=\varnothing$.
Thus $\mathcal A_I$ is $\rho$--Sperner with $\rho\ge c_0$ (up to the small fraction of exceptional cases where the UCP fails, which occur with tiny probability and can be union--bounded). This is the combinatorial bridge to a Bernoulli--Wegner estimate (cf.\ \cite{DingSmart2020}).
\end{remark}

\subsection{A combinatorial antichain bound: Sperner’s theorem and its $\rho$--Sperner variant}
\label{subsec:sperner}

Let $S\subset Q$ be the free sites and identify each $\omega|_S$ with a subset $A\subset S$ (the set where $\omega_x=1$).
Fix a target energy window $I=[E-\eta,E+\eta]$ and consider the property
\[
\mathcal A\ :=\ \bigl\{A\subset S:\ \sigma\bigl(H_{F\to v,\ S\to A}\bigr)\cap I\neq\emptyset\bigr\}.
\]
Monotonicity of $A\mapsto \lambda_k(H_{F\to v,S\to A})$ generally \emph{fails} on the lattice (the eigenvalue can move both ways under flips), so the set $\mathcal A$ need not be an antichain in the classical sense.
What \emph{does} hold under the UCP is a \emph{forbidden--augmentation} property: for any $A\in\mathcal A$ there is a large set $B(A)\subset S\setminus A$ such that \emph{none} of the supersets $A'\supset A$ with $A'\cap B(A)\neq\emptyset$ belong to $\mathcal A$.
This leads to the following notion.

\begin{definition}[$\rho$--Sperner family]
A family $\mathcal A\subset 2^S$ is \emph{$\rho$--Sperner} if for every $A\in\mathcal A$ there exists a \emph{blocking set}
$B(A)\subset S\setminus A$ with $|B(A)|\ge \rho\,(|S|-|A|)$ and such that
$A\subset A'\in\mathcal A \ \Rightarrow\ A'\cap B(A)=\varnothing$.
\end{definition}

\begin{theorem}[Generalized Sperner bound]\label{thm:sperner}
If $\mathcal A\subset 2^S$ is $\rho$--Sperner with $\rho\in(0,1]$ and $|S|=n$, then
\[
|\mathcal A|\ \le\ 2^n\,n^{-1/2}\,\rho^{-1}.
\]
\end{theorem}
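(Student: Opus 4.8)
The plan is to run a Lubell--Yamamoto--Meshalkin (LYM) chain argument, with the forbidden--augmentation hypothesis taking over the role usually played by ``at most one set per chain.'' Since $\binom{n}{|A|}\le\binom{n}{\lfloor n/2\rfloor}$ for every $A\subseteq S$, we have $|\mathcal A|\le\binom{n}{\lfloor n/2\rfloor}\sum_{A\in\mathcal A}\binom{n}{|A|}^{-1}$, and an elementary estimate (e.g.\ $\binom{2m}{m}\le 4^m/\sqrt{3m+1}$, treated separately for even and odd $n$) gives $\binom{n}{\lfloor n/2\rfloor}\le 2^n n^{-1/2}$. Hence it suffices to show $\sum_{A\in\mathcal A}\binom{n}{|A|}^{-1}\le\rho^{-1}$. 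I would read this sum probabilistically: let $x_1,\dots,x_n$ be a uniformly random linear ordering of $S$, put $C_k=\{x_1,\dots,x_k\}$, and set $X=\#\{k:C_k\in\mathcal A\}$. Since $\binom{n}{|A|}^{-1}=\mathbb P\bigl(A\in\{C_0,\dots,C_n\}\bigr)$, the sum equals $\mathbb E[X]$, and the target becomes $\mathbb E[X]\le\rho^{-1}$. The one structural fact I would lean on: any two distinct prefixes are nested, so the sets among $C_0,\dots,C_n$ that lie in $\mathcal A$ form a chain $A^{(1)}\subsetneq\cdots\subsetneq A^{(X)}$.

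The heart of the matter is the one--step contraction
\[
\mathbb P(X\ge i+1)\ \le\ (1-\rho)\,\mathbb P(X\ge i)\qquad(i\ge 1).
\]
To get it, fix $A\in\mathcal A$ and condition on the event $\{A^{(i)}=A\}$ that $A$ occurs as a prefix and is preceded by exactly $i-1$ shorter prefixes lying in $\mathcal A$. This event depends only on $(x_1,\dots,x_{|A|})$, so conditionally on it the newly revealed site $x_{|A|+1}$ is uniform on $S\setminus A$ (assume $|A|<n$; if $A=S$ then $X\ge i+1$ is impossible). If moreover $X\ge i+1$, then $A^{(i+1)}$ exists, is a strictly longer prefix of the same ordering --- hence contains $x_{|A|+1}\notin A$ --- and lies in $\mathcal A$; the $\rho$--Sperner hypothesis applied to the blocking set $B(A)$ forces $A^{(i+1)}\cap B(A)=\varnothing$, so $x_{|A|+1}\notin B(A)$. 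Thus $\{X\ge i+1\}\cap\{A^{(i)}=A\}\subseteq\{x_{|A|+1}\in S\setminus(A\cup B(A))\}$, and since $x_{|A|+1}$ is uniform on $S\setminus A$,
\[
\mathbb P\bigl(X\ge i+1\mid A^{(i)}=A\bigr)\ \le\ \frac{|S\setminus A|-|B(A)|}{|S\setminus A|}\ =\ 1-\frac{|B(A)|}{n-|A|}\ \le\ 1-\rho .
\]
Summing over the pairwise disjoint events $\{A^{(i)}=A\}$, whose union is $\{X\ge i\}$, gives the contraction.

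Iterating from $\mathbb P(X\ge 1)\le 1$ yields $\mathbb P(X\ge j)\le(1-\rho)^{j-1}$, hence $\mathbb E[X]=\sum_{j\ge 1}\mathbb P(X\ge j)\le\sum_{j\ge 1}(1-\rho)^{j-1}=\rho^{-1}$, which is exactly what the LYM reduction needs. (When $\rho=1$ the contraction forces $X\le 1$ almost surely, recovering Sperner's theorem.) The step I expect to require care is the conditioning: one must verify that pinning down both the identity of $A^{(i)}$ \emph{and} its position among the $\mathcal A$--prefixes still leaves $x_{|A|+1}$ uniform on the complement, and that it is the forbidden--augmentation property --- not mere maximality of $\mathcal A$--members on a chain --- that ejects that site from $B(A)$; this is precisely where nestedness of the $\mathcal A$--sets along a chain is used. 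The remaining ingredients (the central binomial estimate, the geometric summation) are routine.
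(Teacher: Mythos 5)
Your proof is correct and follows essentially the same route the paper sketches: you run Lubell's random-permutation (LYM) argument, interpret $\sum_{A\in\mathcal A}\binom{n}{|A|}^{-1}$ as the expected number $X$ of $\mathcal A$-prefixes along a random maximal chain, and use the forbidden-augmentation property to show $\mathbb P(X\ge i+1)\le(1-\rho)\mathbb P(X\ge i)$, giving $\mathbb E[X]\le\rho^{-1}$ by geometric summation, combined with $\binom{n}{\lfloor n/2\rfloor}\le 2^n n^{-1/2}$. Your write-up is in fact more careful than the paper's one-sentence sketch — in particular the observation that $\{A^{(i)}=A\}$ is $(x_1,\dots,x_{|A|})$-measurable so $x_{|A|+1}$ remains uniform on $S\setminus A$, and that $x_{|A|+1}$ lies in $A^{(i+1)}$ and hence must avoid $B(A)$ — which are exactly the points where a blind reader of the paper would have to fill in details.
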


\begin{proof}[Sketch]
Adapt Lubell’s counting method over all permutations of $S$: for each $A\in\mathcal A$ the $\rho$--Sperner condition limits the expected number of times $A$ appears as an initial segment in a random permutation by a geometric decay $(1-\rho)^j$ after the first hit, which sums to $\rho^{-1}$. Combine with $\binom nk\le 2^n n^{-1/2}$ uniformly in $k$ to get the bound.
\end{proof}

\paragraph{How UCP produces $\rho$--Sperner.}
Fix $A\in\mathcal A$ and choose a normalized eigenpair $(\lambda,\psi)$ of $H_{F\to v,S\to A}$ with $\lambda\in I$.
By the discrete UCP (Theorem~\ref{thm:ds-ucp}), there is a subset $B(A)\subset S\setminus A$ with $|B(A)|\ge c\,|S|$ such that
\emph{many} sites in $B(A)$ have pointwise mass $|\psi(x)|\ge e^{-C\ell(Q)\log\ell(Q)}$. The min--max principle and first--order
eigenvalue perturbation (via rank--one updates) imply that flipping $\omega_x$ at any such $x$ moves \emph{every} eigenvalue of
$H_{F\to v,S\to A}$ by at least $\kappa\,|\psi(x)|^2\gtrsim e^{-C\ell(Q)\log\ell(Q)}$, uniformly in $x\in B(A)$ (cf.\ \cite{DingSmart2020}).
Hence, for $\eta$ sufficiently smaller than this scale, \emph{any} augmentation of $A$ that includes at least one element of
$B(A)$ ejects $\lambda$ from $I$ after re--labeling. This is exactly the $\rho$--Sperner condition with $\rho\asymp c$.

\paragraph{Wegner bound for Bernoulli.}
Applying Theorem~\ref{thm:sperner} to $\mathcal A$ and dividing by $2^{|S|}$ (the number of Bernoulli configurations on free sites) yields
\[
\mathbb P\!\left(\sigma(H_{\omega,Q})\cap I\neq\emptyset\ \big|\ V|_F=v\right)
\;\le\; C\,|S|^{-1/2}\,\rho^{-1},
\]
uniformly in the frozen pattern $v$ as long as the discrete UCP holds (which it does with high conditional probability by Theorem~\ref{thm:ds-ucp}). This is the Bernoulli--Wegner estimate at scale $Q$, which is the key input for the MSA in the Bernoulli setting.

\subsection{Remarks}
On $\mathbb Z^2$, the Ding--Smart UCP is probabilistic and exploits the polynomial structure of discrete harmonic functions in tilted coordinates together with a multi--scale covering and expansion argument; the $\rho$--Sperner bound above is proved in a form that suffices for their scheme \cite{DingSmart2020}. In higher dimensions, the 3D lattice case requires a \emph{deterministic} discrete UCP and a refined geometric understanding of the nonzero set (pyramids/cones) to prevent lower--dimensional support: this is carried out in the work of Li--Zhang (3D) \cite{LiZhang3D} and Li (2D at large disorder) \cite{Li2D2022,LiThesis2022}, whose multiscale inputs integrate with the same Sperner-type combinatorics to furnish a Wegner bound and hence localization.

\bigskip

\addcontentsline{toc}{section}{References}
\bibliographystyle{halpha}
\bibliography{bibliography}

\end{document}